\newtheorem{theorem}{Theorem}[section]
\newtheorem{proposition}[theorem]{Proposition}
\newtheorem{lemma}[theorem]{Lemma}
\theoremstyle{definition} 
\newcommand{\R}{\mathbb R}
\numberwithin{equation}{section} 
\numberwithin{theorem}{section}
\numberwithin{figure}{section}
\begin{document}

\author[J.A.~Hoisington]{Joseph Ansel Hoisington} \address{Max Planck Institute for Mathematics}\email{hoisington@mpim-bonn.mpg.de}  

\title[Energy-minimizing Maps of $\R P^{n}$]{Energy-minimizing Mappings of Real Projective Spaces}

\keywords{Energy-minimizing maps, infima of energy functionals in homotopy classes}
\subjclass[2020]{Primary 53C43 Secondary 53C35}

\begin{abstract}
We give a sharp lower bound for the energy in homotopy classes of mappings from real projective space to Riemannian manifolds, together with an upper bound for its infimum.  We characterize the maps which attain this lower bound for energy, and we explain how the infimum of the energy in a homotopy class of mappings of real projective $n$-space is determined by an associated class of mappings of the real projective plane.  
\end{abstract}
\maketitle

%%%%%%%%%%%%%%%%%%%%%%%%%%%%%%%%%%%%%%%%%%%%%%%%%%%%%%%%%%%%%%%%%%%%%%%%%%%%%%%%%%%%%%%%%%%%%%%%%%%%%%%%%%%%%%%%%%%%%%%%%

\section{Introduction} 
\label{introduction} 

A basic observation in the study of harmonic maps is that the identity map of the sphere $(S^{n},g_{0})$ of dimension $n \geq 3$ does not minimize energy in its homotopy class.  Eells and Sampson illustrate this, with an example which they attribute to Morrey, in their foundational work on the subject in \cite{ES1}.  It is now known to be a special case of several more general results.  One of these occurs in the work of White \cite{Wh1}, which implies that in any homotopy class of mappings from quaternionic projective space, the Cayley projective plane, or the sphere of dimension $3$ or greater to a Riemannian manifold, the infimum of the energy is $0$. \\ 

The results in \cite{H1} show that in all homotopy classes of mappings from complex projective space to a Riemannian manifold, the infimum of the energy is proportional to the infimal area in the homotopy class of mappings of $S^{2}$ which represents the induced homomorphism on the second homotopy group.  Together, these results and the results in \cite{Wh1} above determine the infimum of the energy in all homotopy classes of mappings of compact, rank-$1$ Riemannian symmetric spaces other than real projective space (in particular, for all simply-connected spaces in this family).  The purpose of this note is to address this problem for real projective space.  Our first result is a two-sided estimate for the infimum of the energy in a homotopy class: 

\begin{theorem}
\label{inf_estimate}
Let $(\R P^{n},g_{0})$, $n \geq 2$, be real projective space with its Riemannian metric of constant curvature $1$.  Let $\Psi$ be a homotopy class of mappings from $(\R P^{n},g_{0})$ to a Riemannian manifold $(M,g)$ and $\psi$ the homotopy class of mappings of $(\R P^{2},g_{0})$ represented by composing the inclusion $\R P^{2} \subseteq \R P^{n}$ with $F \in \Psi$.  Let $A^{\star}$ be the infimal area of mappings $f \in \psi$.  Then, letting $E_{2}(F)$ be the energy of a map $F$, $\sigma(n)$ the volume of the unit $n$-sphere, and $C_{n} = \frac{n \sigma(n)}{8 \pi}$, 

\begin{equation}
\label{inf_est_eqn_2}
\displaystyle C_{n} A^{\star} \leq \inf\limits_{F \in \Psi} E_{2}(F) \leq 2 \left(\frac{n-1}{n}\right) C_{n} A^{\star}. \smallskip    
\end{equation}

If $n \geq 3$ and $F \in \Psi$ has $E_{2}(F) = C_{n}A^{\star}$, then $F$ is a homothety onto a totally geodesic submanifold. \end{theorem}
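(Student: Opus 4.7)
The plan is to establish the two inequalities by independent methods and then analyze the equality case.

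For the lower bound, the idea is to average the energy over the family of totally geodesic copies of $\R P^{2}$ in $\R P^{n}$. Each $3$-dimensional subspace $V \subseteq \R^{n+1}$ gives a totally geodesic $\R P^{2}_{V} \subseteq \R P^{n}$, parameterized by $V$ ranging over the Grassmannian $G = \mathrm{Gr}(3, n+1)$ with its $O(n+1)$-invariant probability measure. Since any two such $\R P^{2}_{V}$ are related by an isometry of $\R P^{n}$ isotopic to the identity, the restriction $F|_{\R P^{2}_{V}}$ lies in $\psi$ for every $V$, and the standard pointwise inequality between energy and Jacobian for maps from a $2$-dimensional domain gives $E_{2}(F|_{\R P^{2}_{V}}) \geq \area(F|_{\R P^{2}_{V}}) \geq A^{\star}$. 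Integrating over $G$ and using that averaging $|dF|_{\pi}|^{2}$ over $2$-planes $\pi \subseteq T_{p}\R P^{n}$ yields $\tfrac{2}{n}|dF(p)|^{2}$ by rotational symmetry, the left-hand side becomes a specific multiple of $E_{2}(F)$; computing the normalization via the volumes of $S^{n}$ and of $\R P^{2}$ gives $E_{2}(F) \geq C_{n}A^{\star}$ exactly.

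For the upper bound, I would construct near-minimizers via a collapse map. Fix a $3$-plane $V$ with orthogonal complement $V^{\perp}$, and use join coordinates: points of $S^{n}$ off $V^{\perp}$ are written uniquely as $\cos(t)\,u + \sin(t)\,w$ with $u \in S^{2} \cap V$, $w \in S^{n-3} \cap V^{\perp}$ and $t \in [0, \pi/2)$. This descends to a projection $\rho \colon \R P^{n} \setminus \R P^{n-3} \to \R P^{2}$, $[v] \mapsto [u]$, whose energy density at $[v]$ is $\cos^{-2}(t)$. Taking $f \in \psi$ with $E_{2}(f)$ close to $A^{\star}$ (available since the conformal structure on $\R P^{2}$ is unique, so one may reparametrize to make $E_{2}(f)$ approximate $\area(f)$), set $F = f \circ \rho$. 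The metric $dt^{2} + \cos^{2}(t)\,g_{S^{2}} + \sin^{2}(t)\,g_{S^{n-3}}$ has volume factor $\cos^{2}(t)\sin^{n-3}(t)$, which cancels the singular $\cos^{-2}(t)$ to give $E_{2}(F) = \sigma(n-3)\,E_{2}(f)\int_{0}^{\pi/2}\sin^{n-3}(t)\,dt$, and a direct Beta-function identity rewrites this as $2\bigl(\tfrac{n-1}{n}\bigr)C_{n}E_{2}(f)$. The singularity of $\rho$ along the codimension-$3$ subset $\R P^{n-3}$ is removed by smoothing in a tubular $\varepsilon$-neighborhood, whose volume $O(\varepsilon^{3})$ times the smoothing's energy density $O(\varepsilon^{-2})$ contributes an extra $O(\varepsilon)$ that vanishes in the limit.

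For the equality case with $n \geq 3$, equality forces $E_{2}(F|_{\R P^{2}_{V}}) = \area(F|_{\R P^{2}_{V}}) = A^{\star}$ for almost every, hence every, $V$. The first equality means $F|_{\R P^{2}_{V}}$ is weakly conformal, so $dF_{p}$ is a conformal linear map on every tangent $2$-plane, which forces $F^{*}g = \lambda^{2}g_{0}$ for a single function $\lambda$ on $\R P^{n}$. The second equality then reads $\int_{\R P^{2}_{V}} \lambda^{2}\,dA_{0} = A^{\star}$ for all $V$; since the only continuous functions on $\R P^{n}$ (equivalently, on $S^{n}$) with constant integral over every totally geodesic $\R P^{2}$ are the constants --- as one sees by the spherical-harmonic decomposition on the double cover --- $\lambda$ is constant, so $F$ is a homothety onto its image $N = F(\R P^{n})$. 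Finally, each $F(\R P^{2}_{V})$ is minimal in $M$ (being area-minimizing) and is totally geodesic in $N$ (since $F$ is a local isometry up to scale), so the second fundamental form $II$ of $N$ in $M$ satisfies $\mathrm{tr}_{\pi}II = 0$ on every $2$-plane $\pi \subseteq TN$; for $\dim N = n \geq 3$ this forces $II \equiv 0$ by polarization, so $N$ is totally geodesic.

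The main obstacle will be the technical handling of the codimension-$3$ singularity in the upper-bound construction: producing a genuine continuous map in the correct homotopy class $\Psi$ while keeping the energy arbitrarily close to the value predicted by the singular formula. The lower bound is routine integral geometry, and the equality analysis combines standard conformal-linear-algebra facts with the injectivity result for averaging over totally geodesic $\R P^{2}$'s and an elementary curvature computation.
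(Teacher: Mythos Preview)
Your lower bound is exactly the paper's argument (the case $k=2$ of its integral-geometric Lemma~\ref{double_fibration_prop}), and your equality analysis reaches the same conclusions, though by a heavier route: the paper observes that an energy-minimizer is harmonic, hence smooth, and then uses the elementary fact that a harmonic semiconformal map in dimension $\geq 3$ is a homothety (Lemma~\ref{harmonic_conformal_lemma}); you instead invoke injectivity of the spherical Radon transform over great $S^{2}$'s to force the conformal factor to be constant. That works, but you should still insert the ``minimizer $\Rightarrow$ harmonic $\Rightarrow$ smooth'' step before passing from ``almost every $V$'' to ``every $V$'' and before speaking of the second fundamental form of the image.

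The genuine gap is in the upper bound. Your map $f\circ\rho$ is built from an \emph{arbitrary} $f\in\psi$, and whatever continuous extension you produce over the tube around $\R P^{n-3}$ lands, a priori, only in the $2$-homotopy class $\widehat{\Psi}$ of maps inducing $\psi$, not in the prescribed class $\Psi$; when $\pi_{k}(M)\neq 0$ for some $3\leq k\leq n$ these can differ. (Indeed, for $n=3$ your boundary map on $\partial T_{\varepsilon}\cong S^{2}$ is $f$ precomposed with the double cover, and the choice of null-homotopy filling it in is exactly the ambiguity.) You correctly flag ``producing a continuous map in the correct homotopy class $\Psi$'' as the obstacle, but the smoothing you describe does not address it. The paper fixes this by never leaving $\Psi$: it starts with a given $F\in\Psi$, uses the homotopy extension property to arrange that $F|_{\R P^{2}}$ has energy $<A^{\star}+\varepsilon$, and then precomposes with a self-map $\Theta$ of $\R P^{n}$ \emph{homotopic to the identity} that retracts most of $\R P^{n}$ onto $\R P^{2}$ while expanding a small tube about $\R P^{n-3}$ to cover the rest. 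Since $F\circ\Theta\in\Psi$ automatically, the energy estimate you wrote down then finishes the argument. Alternatively, your construction as stated becomes a proof once you invoke White's theorem (Theorem~\ref{white_result}) that $\inf_{\Psi}E_{2}=\inf_{\widehat{\Psi}}E_{2}$.
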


The case $n=2$ of Theorem \ref{inf_estimate} states that the infimum of the energy in any homotopy class of mappings of $(\R P^{2},g_{0})$ is equal to the infimal area.  In Theorem \ref{lemaire_theorem} we quote a result of Lemaire \cite{Lem1} which implies that maps of $(\R P^{2},g_{0})$ that attain this infimal value are conformal branched immersions. \\ 

The lower bound $C_{n}A^{\star}$ in (\ref{inf_est_eqn_2}) is equal to the energy of the identity map of $(\R P^{n},g_{0})$ in its homotopy class, so this part of Theorem \ref{inf_estimate} is optimal.  Croke first established that the identity map of $(\R P^{n},g_{0})$ minimizes energy in its homotopy class in \cite{Cr1}, as a corollary of:  

\begin{theorem}{\em (Croke \cite[Theorem 1]{Cr1})} \label{chris_theorem} Let $F:(\R P^{n},g_{0}) \rightarrow (M,g)$ be a map to a Riemannian manifold, $\gamma$ a generator of $\pi_{1}(\R P^{n})$, and $L^{\star}$ the infimal length of curves freely homotopic to $F_{*}\gamma$ in $M$.  Then:  

\begin{equation}
\label{chris_equation}
\displaystyle E_{2}(F) \geq \frac{n\sigma(n)}{4\pi^{2}} L^{\star^{2}}. \smallskip
\end{equation}

Equality implies that $F$ is a homothety onto a totally geodesic submanifold. 
\end{theorem}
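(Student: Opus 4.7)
The plan is to average a length estimate over the natural $(n-1)$-parameter family of closed geodesics in $(\R P^n, g_0)$. A crucial feature of this metric is that every geodesic closes up at time $\pi$: for any unit tangent vector $v \in T_p \R P^n$, the arclength geodesic $c_v : [0,\pi] \to \R P^n$ with $c_v(0) = p$ and $c_v'(0) = v$ satisfies $c_v(\pi) = p$ and represents the generator $\gamma$ of $\pi_1(\R P^n)$.

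Consequently $F \circ c_v$ is a closed curve in $M$ freely homotopic to $F_* \gamma$, so $L(F \circ c_v) \geq L^\star$. Cauchy--Schwarz then gives
\[
L^{\star 2} \;\leq\; L(F \circ c_v)^2 \;\leq\; \pi \int_0^\pi |dF(c_v'(t))|^2 \, dt.
\]
I would integrate this pointwise inequality over the unit tangent bundle $UT\R P^n$ against the Liouville measure $d\mu$. Since the geodesic flow preserves $d\mu$, Fubini collapses the right-hand double integral to $\pi^2 \int_{UT\R P^n} |dF(v)|^2 \, d\mu$, and the fibrewise integral $\int_{UT_p\R P^n} |dF_p(v)|^2 \, dv = \frac{\sigma(n-1)}{n}|dF_p|^2$ is immediate from the standard moment identity $\int_{S^{n-1}} v_i v_j \, dv = \frac{\sigma(n-1)}{n}\delta_{ij}$. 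Combining this with $\vol(UT\R P^n) = \tfrac{1}{2}\sigma(n)\sigma(n-1)$ on the left and $\int_{\R P^n} |dF|^2 \, d\vol = 2 E_2(F)$ on the right yields the desired inequality $E_2(F) \geq \frac{n\sigma(n)}{4\pi^2} L^{\star 2}$ after arithmetic.

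For the equality case I would trace the Cauchy--Schwarz equality back. Sharpness in the integrated bound forces, for $\mu$-a.e.\ $(p,v)$ and hence everywhere by continuity, $|dF(c_v'(t))|$ to be constant in $t$ and simultaneously $L(F \circ c_v) = L^\star$. Combining these, $|dF_p(v)| \equiv L^\star/\pi$ on the entire unit tangent bundle, so $F$ is a homothetic immersion with dilation factor $L^\star/\pi$. Each $F \circ c_v$ is then a shortest loop in its free homotopy class, hence a closed geodesic in $M$, so $F$ maps geodesics of $\R P^n$ to geodesics of $M$ and the image is therefore a totally geodesic submanifold. (If $L^\star = 0$ the inequality is vacuous, and equality forces $F$ constant.)

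The main obstacle is organizing the integral-geometric bookkeeping on $UT\R P^n$ cleanly---keeping the normalizations of the Liouville measure, the $S^{n-1}$ second moment, the convention for $E_2(F)$, and the volume ratio $\vol(\R P^n) = \tfrac{1}{2}\sigma(n)$ all straight so that the constant $\frac{n\sigma(n)}{4\pi^2}$ emerges exactly; the rigidity analysis is then a comparatively routine unpacking of the two equalities above.
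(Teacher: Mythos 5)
Your argument is correct, and it is essentially Croke's original proof of this theorem --- which the paper itself does not reproduce: Theorem \ref{chris_theorem} is quoted from \cite{Cr1}, and the paper instead points out that it can be \emph{derived} from Theorem \ref{inf_estimate} together with Pu's systolic inequality \cite{Pu}, since Pu's inequality bounds the infimal area $A^{\star}$ of the induced class of maps of $(\R P^{2},g_{0})$ from below by $\frac{2}{\pi}L^{\star 2}$. Your route --- applying $L(F\circ c_{v})\geq L^{\star}$ and Cauchy--Schwarz along each closed geodesic of length $\pi$, then averaging over the unit tangent bundle using invariance of the Liouville measure and the second-moment identity on $S^{n-1}$ --- is exactly the $k=1$ integral-geometric argument that the paper identifies (in the remark after Lemma \ref{double_fibration_prop}) as ``the starting point for the proof of Theorem \ref{chris_theorem} in \cite{Cr1}''; your constants check out ($\vol(U\R P^{n})=\tfrac{1}{2}\sigma(n)\sigma(n-1)$, fibrewise average $\tfrac{\sigma(n-1)}{n}|dF_{p}|^{2}$, and $\int|dF|^{2}=2E_{2}(F)$ combine to give $\frac{n\sigma(n)}{4\pi^{2}}$), and your rigidity analysis matches Croke's. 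What each approach buys: yours is self-contained and elementary, needing only Cauchy--Schwarz and Liouville invariance; the paper's derivation via $A^{\star}$ is structurally more informative, since it shows the two-dimensional lower bound $C_{n}A^{\star}$ dominates $\frac{n\sigma(n)}{4\pi^{2}}L^{\star 2}$ and is strictly stronger in most homotopy classes. The only point to tighten is the equality case for maps that are merely Lipschitz: ``hence everywhere by continuity'' presupposes continuity of $dF$, so one should either first upgrade regularity (an energy minimizer satisfying equality is harmonic, hence smooth) or phrase the conclusion almost everywhere before integrating again.
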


Theorem \ref{chris_theorem} follows from Theorem \ref{inf_estimate} and Pu's systolic inequality \cite{Pu,CK1}, which implies that the lower bound in Theorem \ref{inf_estimate} is bounded below by the lower bound in Theorem \ref{chris_theorem}.  The strengthened versions of Pu's inequality given by Katz-Nowik and Katz-Sabourau in \cite{KN1,KS1} imply that in most homotopy classes, i.e. other than in those for which the induced class of mappings of $(\R P^{2},g_{0})$ admits a sequence of maps with a fairly rigid characterization, the lower bound in Theorem \ref{inf_estimate}, and thus the infimum of the energy, is strictly greater than the lower bound in Theorem \ref{chris_theorem}.  The result of Theorem \ref{inf_estimate} also shows that the infimum of the energy in a homotopy class cannot be bounded above in terms of the infimal length $L^{\star}$ in Theorem \ref{chris_theorem}. \\ 

Although Theorem \ref{inf_estimate} generally does not give the exact value of the infimal energy in a homotopy class, it complements the results of White in \cite{Wh1}, which imply that the infimum of the energy in a homotopy class of mappings of real projective $n$-space depends only on the induced class of mappings of the real projective plane: 

\begin{theorem}{\em (See \cite[Theorem 1]{Wh1})}
\label{white_result}
Let $\Psi$ and $\psi$ be as in Theorem \ref{inf_estimate}, and let $\widehat{\Psi}$ be the union of all homotopy classes of mappings from $(\R P^{n},g_{0})$ to $(M,g)$ that induce the class $\psi$ of mappings of $(\R P^{2},g_{0})$.  Then $\inf\limits_{F \in \Psi} E_{2}(F) = \inf\limits_{F \in \widehat{\Psi}} E_{2}(F)$. 
\end{theorem}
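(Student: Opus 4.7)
The plan is to deduce Theorem \ref{white_result} directly from White's theorem in \cite{Wh1}, which asserts that for the $p$-energy functional on maps from a compact Riemannian manifold $X$ with a fixed CW decomposition to $(M,g)$, the infimum of $E_p$ in a homotopy class depends only on the homotopy class of the restriction to the $\lfloor p \rfloor$-skeleton of $X$. I apply this with $p=2$ and $X = \R P^n$ equipped with its standard CW decomposition having one cell in each dimension $0, 1, \ldots, n$; the $k$-skeleton is then $\R P^k$, so in particular the $2$-skeleton coincides with the $\R P^2 \subseteq \R P^n$ appearing in the statement. Under this identification the class $\psi$ is precisely the homotopy class of $F|_{\R P^2}$ for $F \in \Psi$, and $\widehat{\Psi}$ is, by definition, the set of all homotopy classes of maps $\R P^n \to M$ whose restriction to the $2$-skeleton lies in $\psi$. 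White's theorem then says that $\inf E_2$ is the same on every such class, which is exactly the claim.

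The trivial inequality is $\inf_{\widehat{\Psi}} E_2 \leq \inf_{\Psi} E_2$ since $\Psi \subseteq \widehat{\Psi}$. For the reverse, given $F' \in \widehat{\Psi}$ and $\varepsilon > 0$, one fixes $F_0 \in \Psi$ and produces $F \in \Psi$ with $E_2(F) \leq E_2(F') + \varepsilon$ by White's method: since $F'|_{\R P^2} \simeq F_0|_{\R P^2}$, one first deforms $F'$ on a small tubular neighborhood of $\R P^2$ to agree with $F_0$ there, at arbitrarily small energy cost; then on each of the remaining open cells of dimension $k \geq 3$ one replaces the map by the chosen extension of $F_0$, exploiting the fact that a single interior point has vanishing $2$-capacity in dimension $\geq 3$. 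Concretely, the replacement is carried out by a homotopy concentrated on arbitrarily small balls, incurring arbitrarily small $E_2$.

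The main technical ingredient is the capacity/removable-singularity step in the cells of dimension $\geq 3$, but this is precisely the content of White's theorem and is taken as an input. The only verification required here is the elementary point that the standard CW structure on $\R P^n$ has $2$-skeleton equal to the distinguished $\R P^2$ in the statement, after which the theorem follows immediately.
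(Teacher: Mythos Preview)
Your approach is the same as the paper's---invoke White's theorem that the infimal energy depends only on the $2$-homotopy class---but you gloss over a point that the paper is careful about. White's result in \cite{Wh1} is formulated for the $2$-skeleton of a \emph{triangulation} of the domain, not an arbitrary CW structure; and as the paper observes, $\R P^{2}$ is not the $2$-skeleton of any triangulation of $\R P^{n}$ for $n \geq 3$. So your sentence ``the only verification required here is the elementary point that the standard CW structure on $\R P^n$ has $2$-skeleton equal to the distinguished $\R P^2$'' does not quite close the argument: you cannot plug the CW $2$-skeleton directly into White's statement as written.

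The paper bridges this gap by checking that $\widehat{\Psi}$, defined via restriction to the CW $\R P^{2}$, coincides with the $2$-homotopy class in White's sense (restriction to a simplicial $2$-skeleton). This is indeed straightforward---two maps of $\R P^{n}$ agree up to homotopy on the simplicial $2$-skeleton if and only if they agree up to homotopy on $\R P^{2}$, since both conditions are equivalent to inducing the same homomorphisms on $\pi_{1}$ and $\pi_{2}$---but it needs to be said. Alternatively, the sketch you give in your second paragraph (homotope near $\R P^{2}$, then use the vanishing $2$-capacity of a point in each higher cell) is essentially a re-derivation of White's argument adapted to the CW structure; if you make that the proof rather than an illustration, the gap disappears. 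Either route is fine, but as written you are citing a version of White's theorem that is slightly stronger than what \cite{Wh1} actually states.
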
 

Theorem \ref{white_result} implies that the infimum of the energy is zero in any homotopy class of mappings of $(\R P^{n},g_{0})$ that induces the trivial class of mappings of $(\R P^{2},g_{0})$.  Theorem \ref{inf_estimate} sharpens this statement by showing that the infimum of the energy in a homotopy class of mappings of $(\R P^{n},g_{0})$ is zero if and only if it is zero in the induced class of mappings of $(\R P^{2},g_{0})$. \\ 

After proving Theorem \ref{inf_estimate} we give a corollary of the proof, in Proposition \ref{hyperplane_estimate}, which further clarifies the geometric data that determine the infimum of the energy in a homotopy class of mappings of real projective space.  The result implies that the infimum of the energy in a homotopy class of mappings of $(\R P^{n},g_{0})$ for $n \gg 2$ is approximately proportional to the infimum in the induced class of mappings of $(\R P^{n-1},g_{0})$. 

\subsection*{Outline and Notation:} In Section \ref{prelims}, we will review some properties of the energy functional and harmonic maps and establish some results needed in the proof of Theorem \ref{inf_estimate}.  We will also explain how Theorem \ref{white_result} follows from the results in \cite{Wh1}.  In Section \ref{main_proofs}, we will prove Theorem \ref{inf_estimate} and discuss some corollaries of the proof.  We will write $\sigma(n)$ for the volume of the unit $n$-sphere, $g_{0}$ for the canonical Riemannian metric on $\R P^{n}$ and its covering $S^{n}$, and $E_{2}(F)$ for the energy of any map $F$ between Riemannian manifolds. 

\subsection*{Acknolwedgements:} I am happy to thank Werner Ballmann, Christopher Croke, and Sergio Zamora for their input and feedback about this work, and the Max Planck Institute for Mathematics for support and hospitality. 

%%%%%%%%%%%%%%%%%%%%%%%%%%%%%%%%%%%%%%%%%%%%%%%%%%%%%%%%%%%%%%%%%%%%%%%%%%%%%%%%%%%%%%%%%%%%%%%%%%%%%%%%%%%%%%%%%%%%%%%%%

\section{Preliminary Results}  
\label{prelims} 

%%%%%%%%%%%%%%%%%%%%%%%%%%%%%%%%%%%%%%%%%%%%%%%%%%%%%%%%%%%%%%%%%%%%%%%%%%%%%%%%%%%%%%%%%%%%%%%%%%%%%%%%%%%%%%%%%%%%%%%%%

The energy of a Lipschitz map $F:(N^{n},h) \rightarrow (M^{m},g)$ of Riemannian manifolds is: 

\begin{equation}
\label{energy_eqn}
\displaystyle E_{2}(F) = \frac{1}{2} \int\limits_{N} |dF_{x}|^{2} dVol_{h}, \smallskip 
\end{equation}
where $|dF_{x}|$ is the Euclidean norm of $dF:T_{x}N \rightarrow T_{F(x)}M$ at a point $x \in N$ at which $F$ is differentiable (note that the definition of energy in some papers, including \cite{Wh1} and \cite{Cr1}, differs from (\ref{energy_eqn}) by a constant multiple).  We will discuss some background related to maps which are critical for energy, called harmonic maps, below.  The starting point for our proof of Theorem \ref{inf_estimate} is a formula for the energy of a map due to Croke: 

\begin{proposition}{\em (Croke \cite[Proposition 1]{Cr1})} \label{chris_lemma} Let $F:(N^{n},h) \rightarrow (M^{m},g)$ be a Lipschitz map of Riemannian manifolds, $U(N,h)$ the unit tangent bundle of $(N,h)$, and $d\vec{u}$ the canonical measure on $U(N,h)$.  Then: 
	
\begin{equation}
\displaystyle E_{2}(F) = \frac{n}{2 \sigma(n-1)} \int\limits_{U(N,h)} |dF(\vec{u})|^{2} d\vec{u}. %\medskip 
\end{equation} 
\end{proposition}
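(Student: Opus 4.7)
The plan is to reduce the statement to a pointwise averaging identity for linear maps between inner product spaces and then integrate using Fubini's theorem on the unit tangent bundle. Concretely, I would first establish the following linear-algebraic fact: for any linear map $A : V \to W$ between inner product spaces with $\dim V = n$,
\begin{equation*}
|A|^2 \;=\; \frac{n}{\sigma(n-1)} \int_{S^{n-1}} |A(u)|^2 \, du,
\end{equation*}
where $S^{n-1} \subset V$ is the unit sphere and $du$ is its canonical surface measure. This follows by choosing an orthonormal basis $\{e_1, \dots, e_n\}$ of $V$, expanding $|Au|^2 = \sum_{i,j} u_i u_j \langle Ae_i, Ae_j\rangle$, and using the standard symmetry computations $\int_{S^{n-1}} u_i u_j \, du = 0$ for $i \neq j$ and $\int_{S^{n-1}} u_i^2 \, du = \sigma(n-1)/n$, which together give $\int_{S^{n-1}} |Au|^2 \, du = \frac{\sigma(n-1)}{n} \sum_i |Ae_i|^2 = \frac{\sigma(n-1)}{n}|A|^2$.

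Since $F$ is Lipschitz, Rademacher's theorem ensures that $F$ is differentiable at almost every $x \in N$, so the averaging identity applies pointwise to the differential $dF_x$ at such $x$:
\begin{equation*}
|dF_x|^2 \;=\; \frac{n}{\sigma(n-1)} \int_{S_x N} |dF_x(u)|^2 \, du,
\end{equation*}
where $S_x N$ denotes the unit sphere in $T_x N$. I would then integrate this identity over $N$ against $dVol_h$ and apply Fubini's theorem, using that the canonical measure $d\vec{u}$ on $U(N,h)$ decomposes locally as the product of $dVol_h$ on $N$ and the surface measure on each fiber $S_x N$. This yields
\begin{equation*}
\int_N |dF_x|^2 \, dVol_h \;=\; \frac{n}{\sigma(n-1)} \int_{U(N,h)} |dF(\vec{u})|^2 \, d\vec{u},
\end{equation*}
and multiplying both sides by $1/2$ in view of the definition (\ref{energy_eqn}) of $E_2(F)$ gives the claimed formula.

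The argument is really a standard integral-geometric averaging identity, so there is no serious technical obstacle. The only substantive step is the pointwise linear-algebraic fact above; the passage from the pointwise identity to the integrated one is a routine Fubini argument, and Lipschitz regularity is precisely what is needed — via Rademacher's theorem — for $|dF_x|^2$ to be well-defined almost everywhere on $N$. I would regard the main point requiring care as the initial averaging identity, where the constant $n/\sigma(n-1)$ must be tracked correctly through the symmetry computations on $S^{n-1}$.
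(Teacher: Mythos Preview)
Your argument is correct and is exactly the standard proof of this identity: the pointwise spherical-average computation $\int_{S^{n-1}} |Au|^2\,du = \frac{\sigma(n-1)}{n}|A|^2$ followed by Fubini over the unit tangent bundle. Note, however, that the paper does not supply its own proof of this proposition; it simply quotes it as Croke's \cite[Proposition~1]{Cr1} and uses it as input for Lemma~\ref{double_fibration_prop}, so there is no in-paper argument to compare against.
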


For mappings of $(\R P^{n},g_{0})$, Proposition \ref{chris_lemma} implies: 

\begin{lemma}
\label{double_fibration_prop}
Let $F:(\R P^{n},g_{0}) \rightarrow (M,g)$ be a Lipschitz map to a Riemannian manifold, $Gr(k)$ the set of totally geodesic $\R P^{k}$ in $\R P^{n}$, and for $\mathcal{Q} \in Gr(k)$, let $F|_{\mathcal{Q}}$ be the composition of the inclusion $\mathcal{Q} \subseteq \R P^{n}$ with $F$.  Let $d\mathcal{Q}$ be the measure on $Gr(k)$ which is invariant under the action of the isometry group $Isom(\R P^{n},g_{0})$ of $(\R P^{n},g_{0})$, normalized to have total volume $\frac{n\sigma(n)}{k\sigma(k)}$.  Then:   

\begin{equation}
\displaystyle E_{2}(F) = \int\limits_{Gr(k)} E_{2}(F|_{\mathcal{Q}}) d\mathcal{Q}. %\medskip 
\end{equation}
\end{lemma}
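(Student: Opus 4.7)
The plan is to apply Croke's integral formula (Proposition \ref{chris_lemma}) to each restriction $F|_{\mathcal{Q}}$ and then interchange the order of integration in the resulting double integral over $Gr(k)$ and the unit tangent bundles. Applying Proposition \ref{chris_lemma} to $F|_{\mathcal{Q}}: (\mathcal{Q}, g_0) \to (M,g)$ for a given $\mathcal{Q} \in Gr(k)$, and noting that on tangent vectors to $\mathcal{Q}$ the differential of $F|_{\mathcal{Q}}$ agrees with $dF$, gives
\begin{equation*}
\int_{Gr(k)} E_2(F|_{\mathcal{Q}}) \, d\mathcal{Q} = \frac{k}{2\sigma(k-1)} \int_{Gr(k)} \int_{U(\mathcal{Q},g_0)} |dF(\vec{u})|^2 \, d\vec{u} \, d\mathcal{Q}.
\end{equation*}

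The natural object to consider is the incidence set $I = \{(\mathcal{Q}, \vec{u}) : \vec{u} \in U(\mathcal{Q}, g_0)\}$, which fibers in two ways: over $Gr(k)$ with fiber $U(\R P^k, g_0)$, and over $U(\R P^n, g_0)$ with fiber equal to the set of totally geodesic $\R P^k$'s through the basepoint of $\vec{u}$ which are tangent to $\vec{u}$. Swapping the order via Fubini, I would write the double integral as
\begin{equation*}
\int_{U(\R P^n, g_0)} |dF(\vec{u})|^2 \mu(\vec{u}) \, d\vec{u},
\end{equation*}
where $\mu(\vec{u})$ is the $d\mathcal{Q}$-measure of the second fiber over $\vec{u}$. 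Because $Isom(\R P^n, g_0)$ acts transitively on $U(\R P^n, g_0)$ and preserves both the canonical measure on the unit tangent bundle and (by hypothesis) the measure $d\mathcal{Q}$, the function $\mu(\vec{u})$ is constant; call this constant $c$.

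To pin down $c$, I would compute the total mass of $I$ in two ways. On the $Gr(k)$ side, the fiber $U(\R P^k, g_0)$ has volume $\sigma(k-1) \cdot \vol(\R P^k, g_0) = \sigma(k-1) \cdot \sigma(k)/2$, so the total mass equals
\begin{equation*}
\frac{n\sigma(n)}{k\sigma(k)} \cdot \frac{\sigma(k-1) \sigma(k)}{2} = \frac{n \sigma(n) \sigma(k-1)}{2k}.
\end{equation*}
On the $U(\R P^n, g_0)$ side the same quantity is $c \cdot \sigma(n-1) \cdot \sigma(n)/2$, yielding $c = \frac{n \sigma(k-1)}{k \sigma(n-1)}$. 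Substituting back gives
\begin{equation*}
\int_{Gr(k)} E_2(F|_{\mathcal{Q}}) \, d\mathcal{Q} = \frac{k}{2\sigma(k-1)} \cdot \frac{n\sigma(k-1)}{k\sigma(n-1)} \int_{U(\R P^n, g_0)} |dF(\vec{u})|^2 \, d\vec{u} = \frac{n}{2\sigma(n-1)} \int_{U(\R P^n, g_0)} |dF(\vec{u})|^2 \, d\vec{u},
\end{equation*}
which equals $E_2(F)$ by a second application of Proposition \ref{chris_lemma}.

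I expect the only subtle point to be justifying that $\mu(\vec{u})$ is constant and identifying what the normalization of $d\mathcal{Q}$ prescribed in the statement really means; this is essentially a double-fibration / Santaló-type computation, and the transitivity of $Isom(\R P^n, g_0)$ on $U(\R P^n, g_0)$ together with uniqueness (up to scalar) of invariant measures on homogeneous spaces makes it clean. Fubini's theorem applies as long as $|dF(\vec{u})|^2$ is integrable on $U(\R P^n, g_0)$, which follows from $F$ being Lipschitz.
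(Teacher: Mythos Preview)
Your proof is correct and follows essentially the same double-fibration strategy as the paper: introduce the incidence variety of pairs $(\vec{u},\mathcal{Q})$ with $\vec{u}$ tangent to $\mathcal{Q}$, use the two equivariant fibrations over $U(\R P^n,g_0)$ and $Gr(k)$, and invoke Proposition~\ref{chris_lemma} on both ends. The only difference is cosmetic: the paper normalizes the invariant measure on the incidence variety directly to total mass $\frac{n\sigma(n)}{4}$ and leaves the constant-matching implicit, whereas you make the Fubini step and the computation of the fiber constant $c$ explicit.
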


\begin{proof} Let $\mathcal{V}_{k}$ denote the set of pairs $(\vec{u},\mathcal{Q})$, where $\vec{u}$ belongs to the unit tangent bundle $U(\R P^{n},g_{0})$, $\mathcal{Q} \in Gr(k)$, and $\vec{u}$ is tangent to $\mathcal{Q}$, and let $d\mu_{k}$ be the $Isom(\R P^{n},g_{0})$-invariant measure on $\mathcal{V}_{k}$, normalized to have total volume $\frac{n\sigma(n)}{4}$.  Calculating via the $Isom(\R P^{n},g_{0})$-equivariant fibrations $(\vec{u},\mathcal{Q}) \mapsto \vec{u}$ of $\mathcal{V}_{k}$ over $U(\R P^{n},g_{0})$ and $(\vec{u},\mathcal{Q}) \mapsto \mathcal{Q}$ of $\mathcal{V}_{k}$ over $Gr(k)$, Proposition \ref{chris_lemma} implies: 

\begin{equation}
\label{double_fibration_pf_eqn_1}
\displaystyle E_{2}(F) = \int\limits_{\mathcal{V}_{k}} |dF(\vec{u})|^{2} d\mu_{k} = \int\limits_{Gr(k)} E_{2}(F|_{\mathcal{Q}}) d\mathcal{Q}. %\medskip 
\end{equation}
\end{proof} 

The case $k=1$ of Lemma \ref{double_fibration_prop}, in which $Gr(1)$ is the space of oriented geodesics in $(\R P^{n},g_{0})$, is simpler than the general result: each $\vec{u} \in U(\R P^{n},g_{0})$ is tangent to a unique geodesic, so $U(\R P^{n},g_{0})$ admits an $Isom(\R P^{n},g_{0})$-equivariant fibration over $Gr(1)$ and the calculation via the ``incidence variety" $\mathcal{V}_{k}$ is unnecessary.  This observation is the starting point for the proof of Theorem \ref{chris_theorem} in \cite{Cr1}. 

\begin{lemma}
\label{inf_upper_bound_prop}
Let $\Psi$ be as in Theorem \ref{inf_estimate}.  Let $\Psi'$ be the homotopy class of mappings of $(\R P^{n-1},g_{0})$ represented by composing the inclusion $\R P^{n-1} \subseteq \R P^{n}$ with $F \in \Psi$ and $\beta$ the infimum of the energy in $\Psi'$.  Then for $n \geq 3$, $\inf\limits_{F \in \Psi} E_{2}(F) \leq \frac{\sigma(n-2)}{\sigma(n-3)} \beta$.
\end{lemma}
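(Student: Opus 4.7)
The plan is to construct, from a Lipschitz representative $f \in \Psi'$, a one-parameter family $\{F_\delta\}$ of Lipschitz maps of $(\R P^n, g_0)$ into $M$ whose energies tend to $\frac{\sigma(n-2)}{\sigma(n-3)} E_2(f)$ as $\delta \to 0$, and then to invoke Theorem \ref{white_result} to transfer the bound to $\Psi$.

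Use polar coordinates $(r, \theta) \in [0, \pi/2] \times S^{n-1}$ centered at a chosen point $p \in \R P^n$, under which $g_0 = dr^2 + \sin^2(r) g_{S^{n-1}}$ and the equator $\{r = \pi/2\}/(\theta \sim -\theta)$ is a totally geodesic $\R P^{n-1}$; take this to be the $\R P^{n-1}$ in the hypothesis. For $f \in \Psi'$, let $\widetilde{f} : S^{n-1} \to M$ be the pullback along the double cover $\phi : S^{n-1} \to \R P^{n-1}$. Since $\R P^n$ is obtained from $\R P^{n-1}$ by attaching an $n$-cell along $\phi$ and $f$ is homotopic to the restriction of some $F \in \Psi$, the map $\widetilde{f}$ is null-homotopic; fix a Lipschitz null-homotopy $H : S^{n-1} \times [0,1] \to M$ with $H(\cdot,0) = \widetilde{f}$ and $H(\cdot,1) \equiv y_0$, and define
\begin{equation*}
F_\delta(r, \theta) = \begin{cases} \widetilde{f}(\theta), & r \geq \delta,\\ H(\theta, 1 - r/\delta), & 0 \leq r \leq \delta. \end{cases}
\end{equation*}
This descends to a continuous map on $\R P^n$ because $\widetilde{f}(\theta) = \widetilde{f}(-\theta)$ at $r = \pi/2$ and $F_\delta(0, \theta) \equiv y_0$ is consistent with the collapse of $\{r = 0\}$ to $p$.

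For the energy, split into the annulus $\{r \geq \delta\}$ and the ball $\{r \leq \delta\}$. On the annulus $F_\delta$ is independent of $r$, so $|dF_\delta|^2 = |d\widetilde{f}|^2/\sin^2(r)$; combined with the volume element $\sin^{n-1}(r)\, dr\, d\theta$ and the identity $\int_{S^{n-1}} |d\widetilde{f}|^2\, d\theta = 4 E_2(f)$, which uses that $\phi$ is a local isometry, this yields $E_2(F_\delta|_{\{r \geq \delta\}}) = 2 E_2(f) \int_\delta^{\pi/2} \sin^{n-3}(r)\, dr$. On the small ball, bounding $|dF_\delta|$ in terms of the Lipschitz constants of $H$ produces integrals of order $\delta^{n-2}$, which vanish as $\delta \to 0$ because $n \geq 3$. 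Applying the identity $\int_0^{\pi/2} \sin^{k-1}(r)\, dr = \sigma(k)/(2\sigma(k-1))$ with $k = n-2$, which comes from the polar decomposition of $\sigma(n-2)$, gives $\lim_{\delta \to 0} E_2(F_\delta) = \frac{\sigma(n-2)}{\sigma(n-3)} E_2(f)$.

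Since $F_\delta|_{\R P^2} = f|_{\R P^2}$ lies in the class $\psi$ of $\R P^2$-restrictions attached to $\Psi$, each $F_\delta$ belongs to $\widehat{\Psi}$, so Theorem \ref{white_result} gives $\inf_{F \in \Psi} E_2(F) = \inf_{F \in \widehat{\Psi}} E_2(F) \leq E_2(F_\delta)$; sending $\delta \to 0$ and then passing to the infimum over $f \in \Psi'$ completes the proof. The main obstacle is that the naive radial extension $(r, \theta) \mapsto \widetilde{f}(\theta)$ is discontinuous at the cone point $p$, so one must absorb the $\theta$-dependence on a shrinking ball via a null-homotopy of $\widetilde{f}$ and verify that its energy contribution vanishes, which is precisely where $n \geq 3$ enters; the null-homotopy itself is available because $\R P^{n-1}$ is the $(n-1)$-skeleton in the natural CW structure on $\R P^n$, and the classes in $\Psi'$ are exactly those whose lifts to $S^{n-1}$ bound in $M$.
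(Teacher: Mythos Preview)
Your argument is correct, and the core energy computation is essentially the paper's: the integral $2E_{2}(f)\int_{0}^{\pi/2}\sin^{n-3}(r)\,dr$ you obtain in polar coordinates about $p$ is the paper's $\int_{0}^{\pi/2}\cos^{n-3}(r)\,dr$ in Fermi coordinates about $\mathcal{Q}_{0}$, up to the substitution $r\mapsto \tfrac{\pi}{2}-r$.  The genuine methodological difference is in how the homotopy class is controlled.  The paper precomposes a given $F\in\Psi$ with a one-parameter family $\Theta_{t}:\R P^{n}\to\R P^{n}$ homotopic to the identity (built from conformal dilations on $S^{n}$ and the nearest-point retraction onto $\mathcal{Q}_{0}$), so $F\circ\Theta_{t}$ stays in $\Psi$ throughout; the homotopy extension property is then used to replace $F|_{\mathcal{Q}_{0}}$ by a near-minimizer.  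You instead build $F_{\delta}$ from scratch via an arbitrary null-homotopy $H$ of $\widetilde{f}$, which in general only places $F_{\delta}$ in $\widehat{\Psi}$ (different choices of $H$ can differ by a class in $\pi_{n}(M)$), and you then invoke Theorem~\ref{white_result} to pass from $\widehat{\Psi}$ back to $\Psi$.  The paper's route keeps the lemma logically independent of White's theorem; your route is a bit more explicit in assembling the comparison map but imports a nontrivial external result.  If you wished to avoid that dependence, you could choose $H$ to be the restriction to the ball $\{r\le\pi/2\}$ of a map $\underline{F}\in\Psi$ with $\underline{F}|_{\R P^{n-1}}=f$ (obtained by homotopy extension), which would place $F_{\delta}$ in $\Psi$ directly.
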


\begin{proof} Let $\mathcal{Q}_{0}$ be a fixed totally geodesic $\R P^{n-1} \subseteq \R P^{n}$ and $x_{0} \in \R P^{n}$ the unique point at distance $\frac{\pi}{2}$ from $\mathcal{Q}_{0}$.  Let $\tau:S^{n} \rightarrow \R P^{n}$ be the covering map, $\widetilde{x}_{0}$ a point in $\tau^{-1}(x_{0})$, and $\xi:S^{n} \setminus \lbrace -\widetilde{x}_{0} \rbrace \rightarrow \R^{n}$ the stereographic projection which takes $\widetilde{x}_{0}$ to the origin.  Let $\theta_{t}$ be the conformal diffeomorphism of $S^{n}$ given by multiplication by $t$ in the coordinates defined by $\xi$.  An elementary calculation shows that for $n \geq 3$, $\lim\limits_{t \rightarrow \infty} E_{2}(\theta_{t}) = 0$.  (This is the example given by Eells and Sampson in \cite{ES1} to show that for $n \geq 3$, the identity map of $(S^{n},g_{0})$ is homotopic to maps with arbitrarily small energy.) \\ 
	
Let $\Omega \subseteq S^{n}$ be the open hemisphere centered at $\widetilde{x}_{0}$, regarded as a fundamental domain for the covering $\tau$.  Define $\Theta_{t}:\R P^{n} \rightarrow \R P^{n}$ as follows:  if $x \in \R P^{n}$ has a preimage $\widetilde{x}$ via $\tau$ in $\theta_{t}^{-1}(\Omega)$, then $\Theta_{t}(x) = \tau \circ \theta_{t}(\widetilde{x})$.  Otherwise, $\Theta_{t}(x)$ is the image of $x$ under the nearest-point retraction of $\R P^{n} \setminus \lbrace x_{0} \rbrace$ onto $\mathcal{Q}_{0}$.  Because $\theta_{t}$ is conformal and $\lim\limits_{t \rightarrow \infty} E_{2}(\theta_{t}) = 0$, for any Lipschitz map $F$, the energy of $F \circ \Theta_{t}$ on the domain $\tau(\theta_{t}^{-1}(\Omega))$ goes to $0$ as $t \rightarrow \infty$.  Letting $F'$ be the map of $\mathcal{Q}_{0}$ given by composing the inclusion $\mathcal{Q}_{0} \subseteq \R P^{n}$ with $F$, a calculation in Fermi coordinates about $\mathcal{Q}_{0}$ shows: 

\begin{equation}
\label{inf_upper_bound_prop_pf_eq_1}
\displaystyle \lim\limits_{t \rightarrow \infty} E_{2}(F \circ \Theta_{t}) = \int\limits_{\mathcal{Q}_{0}} \int\limits_{0}^{\frac{\pi}{2}} \cos^{n-3}(r) |dF'|^{2} dr dx = \frac{\sigma(n-2)}{\sigma(n-3)} E_{2}(F'). \smallskip 
\end{equation}

For any $F \in \Psi$ and $\varepsilon > 0$, the induced map $F'$ of $\mathcal{Q}_{0}$ is homotopic to a map $\underline{F'}$ with $E_{2}(\underline{F'}) < \beta + \varepsilon$.  By the homotopy extension property for $\R P^{n-1} \subseteq \R P^{n}$, $F$ is homotopic to a map $\underline{F}$ of $\R P^{n}$ which coincides with $\underline{F'}$ along $\mathcal{Q}_{0}$.  The result then follows from (\ref{inf_upper_bound_prop_pf_eq_1}) by composing $\underline{F}$ with the deformation $\Theta_{t}$. \end{proof}

The proof of Theorem \ref{inf_estimate} depends on a basic fact about mappings of surfaces, which we record here for later reference: 

\begin{lemma}
\label{surface_lemma}

Let $f$ be a map from a Riemannian surface $(\Sigma,h)$ to a manifold $(M,g)$.  Then the integrand $\frac{1}{2}|df|^{2}$ for the energy of $f$ is bounded below pointwise by the integrand $\sqrt{\det(df^{T} \circ df)}$ for the area of the image of $f$, with equality precisely where $f$ is semiconformal, i.e. where $f^{*}g$ is a scalar multiple of $h$. 
\end{lemma}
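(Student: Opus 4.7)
The plan is to observe that the claim is entirely pointwise and linear-algebraic: at any point $x \in \Sigma$ where $f$ is differentiable, I need only compare $\tfrac{1}{2}|df_x|^2$ with $\sqrt{\det(df_x^T \circ df_x)}$ as functions of the single linear map $df_x : T_x\Sigma \to T_{f(x)}M$, and then identify the equality case with the condition that $(f^*g)_x$ is a scalar multiple of $h_x$.

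Concretely, I would fix an $h$-orthonormal basis $\{e_1,e_2\}$ of $T_x\Sigma$ and set $a = |df_x(e_1)|_g$, $b = |df_x(e_2)|_g$, and $c = \langle df_x(e_1), df_x(e_2) \rangle_g$. Then $|df_x|^2 = a^2 + b^2$ and $\det(df_x^T \circ df_x) = a^2 b^2 - c^2$, so the inequality to verify is
\begin{equation*}
\tfrac{1}{2}(a^2 + b^2) \;\geq\; \sqrt{a^2 b^2 - c^2}.
\end{equation*}
Since the right side is nonnegative by the Cauchy--Schwarz inequality $|c| \leq ab$, I can square both sides and rearrange to obtain the equivalent statement
\begin{equation*}
(a^2 - b^2)^2 + 4 c^2 \;\geq\; 0,
\end{equation*}
which is immediate.

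For the equality case, the displayed inequality forces $a = b$ and $c = 0$; that is, $df_x(e_1)$ and $df_x(e_2)$ have the same length and are $g$-orthogonal. In the basis $\{e_1,e_2\}$ this says that $(f^*g)_x$ is the matrix $a^2 \cdot I$, i.e.\ $(f^*g)_x = a^2 \cdot h_x$, which is the semiconformality condition (covering the degenerate case $df_x = 0$ with scalar $0$). The main (modest) subtlety to flag is handling that degenerate case so that the statement ``$f^*g$ is a scalar multiple of $h$'' is interpreted to include the zero scalar; no analytic obstacle arises beyond the elementary inequality above.
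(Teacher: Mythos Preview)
Your argument is correct. The paper does not actually prove this lemma: it introduces it as ``a basic fact about mappings of surfaces, which we record here for later reference'' and then moves on, so there is no paper proof to compare against. Your pointwise linear-algebraic reduction---writing $|df_x|^2 = a^2+b^2$ and $\det(df_x^T\circ df_x)=a^2b^2-c^2$ in an $h$-orthonormal basis and reducing to $(a^2-b^2)^2+4c^2\ge 0$---is the standard elementary verification, and your identification of the equality case with $(f^*g)_x = a^2 h_x$ (including the degenerate case $a=0$) is exactly the semiconformality condition.
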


Lemma \ref{surface_lemma} implies that, in the notation of Theorem \ref{inf_estimate}, the infimal area $A^{\star}$ is a lower bound for the energy of maps $f \in \psi$.  Our next lemma shows it is the infimum: 

\begin{lemma}
\label{2-sphere_lemma}

Let $f:(S^{2},g_{0}) \rightarrow (M,g)$ be a smooth map to a Riemannian manifold and $A(f)$ the area of its image.  Then for any $\delta > 0$ there is a diffeomorphism $\phi: S^{2} \rightarrow S^{2}$, homotopic to the identity, such that $E_{2}(f \circ \phi) < A(f) + \delta$.  In particular, $f$ is homotopic to a map with energy less than $A(f) + \delta$.  If $f$ is antipodally invariant, we can choose $\phi$ to be antipodally invariant, so that the same conclusions hold for maps of $(\R P^{2},g_{0})$. 
\end{lemma}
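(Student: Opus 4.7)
The plan is to combine the pointwise energy/area inequality from Lemma \ref{surface_lemma} with the uniformization theorem, reparametrizing $f$ so that the pullback $(f\circ\phi)^{*}g$ becomes almost conformal to $g_{0}$. Writing $\lambda_{1}(x),\lambda_{2}(x)\ge 0$ for the eigenvalues of $f^{*}g$ relative to $g_{0}$, Lemma \ref{surface_lemma} reads $\tfrac12|df|^{2}=\tfrac12(\lambda_{1}+\lambda_{2})\ge\sqrt{\lambda_{1}\lambda_{2}}$ pointwise, with equality precisely where $f$ is semiconformal, while $A(f)=\int_{S^{2}}\sqrt{\lambda_{1}\lambda_{2}}\,dVol_{g_{0}}$.

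Since $f^{*}g$ may degenerate where $df$ has rank less than $2$, I would first regularize: for $\epsilon>0$, the tensor $h_{\epsilon}:=f^{*}g+\epsilon g_{0}$ is a smooth Riemannian metric on $S^{2}$. By the uniformization theorem there is an orientation-preserving diffeomorphism $\phi_{\epsilon}:S^{2}\to S^{2}$ and a positive smooth function $\mu_{\epsilon}$ with $\phi_{\epsilon}^{*}h_{\epsilon}=\mu_{\epsilon}\,g_{0}$; Smale's theorem that $\mathrm{Diff}^{+}(S^{2})$ is connected then gives that $\phi_{\epsilon}$ is isotopic to the identity. Writing $\phi_{\epsilon}^{*}(f^{*}g)=\mu_{\epsilon}g_{0}-\epsilon\phi_{\epsilon}^{*}g_{0}$ and taking the $g_{0}$-trace,
\begin{equation*}
E_{2}(f\circ\phi_{\epsilon})=\int_{S^{2}}\mu_{\epsilon}\,dVol_{g_{0}}-\tfrac{\epsilon}{2}\int_{S^{2}}|d\phi_{\epsilon}|^{2}\,dVol_{g_{0}}\le\mathrm{Vol}(S^{2},h_{\epsilon}),
\end{equation*}
using that $\mu_{\epsilon}\,dVol_{g_{0}}$ is the pullback of $dVol_{h_{\epsilon}}$ under the degree-one diffeomorphism $\phi_{\epsilon}$. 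Finally, $\mathrm{Vol}(S^{2},h_{\epsilon})=\int_{S^{2}}\sqrt{(\lambda_{1}+\epsilon)(\lambda_{2}+\epsilon)}\,dVol_{g_{0}}\to A(f)$ as $\epsilon\downarrow 0$, by uniform convergence of the integrand on the compact $S^{2}$. Taking $\epsilon$ small enough yields $\phi=\phi_{\epsilon}$ with $E_{2}(f\circ\phi)<A(f)+\delta$.

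For the antipodally invariant case, $f^{*}g$ and hence $h_{\epsilon}$ are antipodally invariant, so $h_{\epsilon}$ descends to a smooth metric $\bar{h}_{\epsilon}$ on $\R P^{2}$. Uniformization on $\R P^{2}$ produces a diffeomorphism $\bar{\phi}_{\epsilon}:\R P^{2}\to\R P^{2}$ with $\bar{\phi}_{\epsilon}^{*}\bar{h}_{\epsilon}$ pointwise conformal to the round metric; since the mapping class group of $\R P^{2}$ is trivial, $\bar{\phi}_{\epsilon}$ is isotopic to the identity, and lifting such an isotopy produces an antipodally equivariant $\phi_{\epsilon}:S^{2}\to S^{2}$ isotopic to the identity that satisfies $\phi_{\epsilon}^{*}h_{\epsilon}=\mu_{\epsilon}g_{0}$. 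The same calculation then applies.

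The main technical step is the uniformization combined with the topological facts --- connectedness of $\mathrm{Diff}^{+}(S^{2})$ and triviality of the mapping class group of $\R P^{2}$ --- that allow $\phi$ to be taken isotopic to the identity (and, in the antipodal case, antipodally equivariant). Dominated convergence handles the passage from the regularized metric $h_{\epsilon}$ back to the possibly degenerate $f^{*}g$.
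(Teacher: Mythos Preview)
Your proof is correct and is essentially the same argument as the paper's: the paper regularizes by replacing $f$ with the product immersion $f_{r}=(f,\mathrm{id})$ into $(M,g)\times(S^{2},rg_{0})$, whose pullback metric is exactly your $h_{r}=f^{*}g+rg_{0}$, and then applies uniformization and lets $r\to 0$. The only difference is presentational---the paper phrases the regularization geometrically as turning $f$ into an immersion, while you work directly with the perturbed metric---and you are slightly more explicit about why $\phi$ can be taken isotopic to the identity.
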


\begin{proof} If $f$ is an immersion, the result follows immediately from the uniformization theorem.  If not, let $f_{r}:(S^{2},g_{0}) \rightarrow (M,g) \times (S^{2}, rg_{0})$ be the product of $f$ with a homothety from $(S^{2},g_{0})$ to a $2-$sphere $(S^{2},rg_{0})$ of constant curvature $\frac{1}{r}$.  The uniformization theorem implies there is a diffeomorphism $\phi_{r}:S^{2} \rightarrow S^{2}$ homotopic to the identity such that $f_{r} \circ \phi_{r}$ is conformal.  Letting $A(f_{r})$ be the area of the image of $f_{r}$, Lemma \ref{surface_lemma} then implies $E_{2}(f_{r} \circ \phi_{r}) = A(f_{r})$.  An elementary calculation shows that $E_{2}(f \circ \phi_{r}) < E_{2}(f_{r} \circ \phi_{r})$, and that by choosing $r$ small enough we can ensure $A(f_{r}) < A(f) + \delta$ for any $\delta > 0$.  If $f$ is antipodally invariant, the uniformization theorem implies we can choose an antipodally invariant $\phi_{r}$ as well. \end{proof} 

A homotopy class may not contain an energy-minimizing map, as discussed by Sacks and Uhlenbeck in \cite{SU}, but if a map of $(\R P^{2},g_{0})$ minimizes energy in its homotopy class, Lemmas \ref{surface_lemma} and \ref{2-sphere_lemma} imply it is semiconformal.  Lemaire showed that all harmonic maps of $(\R P^{2},g_{0})$ have this property: 

\begin{theorem}{\em (Lemaire \cite[Theorem 2.8]{Lem1})} \label{lemaire_theorem} A harmonic map from $(S^{2},g_{0})$ or $(\R P^{2},g_{0})$ to a Riemannian manifold is a conformal branched immersion. \end{theorem}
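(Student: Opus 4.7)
The plan is to use the Hopf differential associated to a map of a Riemann surface and exploit the vanishing of holomorphic quadratic differentials on $S^2$.

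First, endow $S^2$ with its underlying complex structure coming from the conformal class of $g_0$, and work in a local conformal coordinate $z = x + iy$. For a smooth map $f: (S^2, g_0) \to (M,g)$, form the Hopf differential
\begin{equation*}
\phi \, dz^2 = \left\langle \partial_z f, \partial_z f \right\rangle_g dz^2,
\end{equation*}
which is the $(2,0)$-part of the complexified pullback metric $f^*g$. A direct computation shows that $\phi\, dz^2$ is a well-defined quadratic differential (its transformation law under a holomorphic change of coordinate is the correct one) and that $f$ is weakly conformal precisely where $\phi = 0$, since $\phi$ vanishing means $|\partial_x f|^2 = |\partial_y f|^2$ and $\langle \partial_x f, \partial_y f\rangle = 0$.

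Next, I would compute $\partial_{\bar z} \phi$ and show it is a component of the tension field of $f$. A short calculation in a conformal coordinate, using the harmonic map equation $\tau(f) = 0$ and the fact that in dimension two the conformal factor of the domain metric does not enter the tension field, gives $\partial_{\bar z} \phi = 0$. Thus $\phi$ is a holomorphic quadratic differential on $S^2$. Since $S^2$ carries no nonzero holomorphic quadratic differentials (by Riemann--Roch, or directly: a holomorphic section of $K_{S^2}^{\otimes 2}$ has degree $-4$ and so must vanish identically), we conclude $\phi \equiv 0$ and $f$ is weakly conformal everywhere.

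It remains to see that $f$ is a \emph{branched} immersion, meaning $df$ can vanish only at isolated points, at each of which $f$ has a well-defined local degree of branching. For this I would use the unique continuation/asymptotic analysis for harmonic maps from a surface due to Hartman--Wintner and Chern, which gives that in a conformal coordinate $z$ around any zero of $df$, one has $\partial_z f(z) = a\, z^{k} + O(|z|^{k+1})$ for some integer $k \geq 1$ and some nonzero vector $a \in T_{f(0)}M \otimes \C$ (the weak conformality $\phi=0$ forces $\langle a,a\rangle = 0$, consistent with a conformal branch point). In particular the zeros of $df$ are isolated, so $f$ is a conformal branched immersion. Finally, for $\R P^2$, lift $f$ to a map $\widetilde f: S^2 \to M$ through the covering $S^2 \to \R P^2$; $\widetilde f$ is harmonic and antipodally equivariant, hence a conformal branched immersion by the previous argument, and this descends to the same conclusion for $f$.

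The main technical obstacle will be the holomorphicity of the Hopf differential: it requires unwinding the harmonic map equation in an arbitrary target $(M,g)$ and checking that the target curvature terms cancel out in $\partial_{\bar z}\phi$. This is a standard but slightly delicate calculation, after which everything else is a direct consequence of complex analysis on $S^2$ together with the classical local structure of harmonic maps near zeros of their differential.
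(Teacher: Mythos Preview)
The paper does not prove this theorem; it is quoted as a result of Lemaire and used as a black box in the proof of Theorem~\ref{inf_estimate}. Your argument is the standard one (essentially Lemaire's own, going back to earlier work on the Hopf differential): holomorphicity of the Hopf differential for harmonic maps of surfaces, vanishing of holomorphic quadratic differentials on $S^{2}$, and the Hartman--Wintner local expansion to control the zeros of $df$. This is correct. One small comment: your worry about ``target curvature terms'' in $\partial_{\bar z}\phi$ is misplaced---once you write $\partial_{\bar z}\phi = 2\langle \nabla_{\bar z}\partial_{z}f,\partial_{z}f\rangle_{g}$ using the pullback connection, the harmonic map equation in a conformal coordinate on a surface is exactly $\nabla_{\bar z}\partial_{z}f = 0$, and no curvature appears. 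The only genuinely delicate step is the local structure at zeros of $df$, and you have correctly identified the relevant input there.
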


Although Lemma \ref{surface_lemma} implies that conformal maps between surfaces are harmonic, maps of manifolds of dimension $n \geq 3$ which are both harmonic and conformal must be homotheties.  For completeness, we prove this here: %we include a proof of this: %% For completeness, we prove this with: 

\begin{lemma}
\label{harmonic_conformal_lemma}
Let $F:(N^{n},h) \rightarrow (M^{m},g)$ be a smooth immersion of Riemannian manifolds, and suppose $F^{*}g = e^{2\eta}h$, where $\eta$ is a smooth function on $N$.  If $n = 2$, $F$ is harmonic if and only if the image of $F$ is a minimal submanifold of $(M,g)$.  If $n \geq 3$, $F$ is harmonic if and only if $F$ is a homothety and the image of $F$ is a minimal submanifold of $(M,g)$.  
\end{lemma}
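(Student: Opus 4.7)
The plan is to characterize harmonicity of $F$ via its tension field $\tau(F)$ and exploit the fact that $F: (N, e^{2\eta}h) \to (M,g)$ is an \emph{isometric} immersion in a different metric on $N$.

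First I would recall the standard transformation law for the tension field under a conformal change of the domain metric: if $\tilde{h} = e^{2\eta}h$ on an $n$-manifold, then for any map $F$,
\begin{equation*}
\tau_{\tilde{h}}(F) = e^{-2\eta}\bigl(\tau_{h}(F) + (n-2)\,dF(\nabla^{h}\eta)\bigr).
\end{equation*}
Since $F$ is isometric as a map $(N, e^{2\eta}h) \to (M,g)$, its tension field in the metric $\tilde{h}$ equals $n$ times the mean curvature vector $H$ of the image submanifold $F(N) \subseteq (M,g)$. Combining the two formulas gives the key identity
\begin{equation*}
\tau_{h}(F) \;=\; n\,e^{2\eta}H \;-\; (n-2)\,dF(\nabla^{h}\eta).
\end{equation*}

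Next I would observe that this formula exhibits $\tau_{h}(F)$ as a sum of two mutually orthogonal vectors along $F$: the term $n e^{2\eta}H$ lies in the normal bundle of $F(N)$ in $M$, while $dF(\nabla^{h}\eta)$ lies tangent to $F(N)$. Therefore $\tau_{h}(F) = 0$ if and only if both terms vanish independently. For $n = 2$ the tangential term drops out automatically, so harmonicity is equivalent to $H \equiv 0$, i.e.\ to $F(N)$ being a minimal submanifold, giving the first case. For $n \geq 3$ harmonicity requires in addition that $dF(\nabla^{h}\eta) = 0$; since $F$ is an immersion, $dF$ is injective, so $\nabla^{h}\eta \equiv 0$, meaning $\eta$ is constant and hence $F$ is a homothety. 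Conversely, if $F$ is a homothety onto a minimal submanifold, both terms in the identity vanish and $F$ is harmonic.

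The main obstacle is bookkeeping: getting the conformal transformation law for the tension field stated with the right sign and coefficient, and cleanly justifying the tangential/normal decomposition (in particular that $dF(\nabla^{h}\eta)$ is truly tangential and $H$ truly normal to $F(N)$, which uses that $F$ is an immersion so that the intrinsic gradient of $\eta$ pushes forward into $T F(N)$). Beyond that, the argument is essentially the identity above applied twice.
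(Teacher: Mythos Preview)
Your proposal is correct and arrives at exactly the same key identity as the paper, namely that $\tau_h(F)$ decomposes as a normal piece proportional to the mean curvature vector plus a tangential piece proportional to $dF(\nabla^h\eta)$, from which both cases follow by orthogonality. The only difference is packaging: the paper derives this identity from scratch by writing out the conformal change of the Levi--Civita connection and tracing the second fundamental form of the map, whereas you quote the conformal transformation law for the tension field and combine it with the fact that the tension of an \emph{isometric} immersion is the mean curvature; these are two ways of doing the same computation.
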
 

\begin{proof} Let $\nabla^{h}$ be the connection of the metric $h$ and $\nabla^{h}\eta$ the gradient of $\eta$ relative to $h$, and let $\nabla^{*}$ be the connection of the metric $F^{*}g$ on $N$.  An elementary calculation shows that for vector fields $V,W$ on $N$, 
	
\begin{equation}
\label{harm_conf_pf_eqn_1}
\displaystyle \nabla^{*}_{V}W = \nabla^{h}_{V}W + V(\eta)W + W(\eta)V - h(V,W)\nabla^{h}\eta. \smallskip %\medskip 
\end{equation}

Let $\nabla^{g}$ be the connection of the metric $g$ on $M$, and let $\alpha_{F}$ be second fundamental form of $F$; that is, the $F^{*}TM$-valued, symmetric bilinear form which, for vector fields $V,W$ on $N$, satisfies $\alpha_{F}(V,W) = F^{*}\nabla^{g}_{V}F_{*}W - F_{*}\nabla^{h}_{V}W$.  We can also write $\alpha_{F}$ as:   %By (\ref{harm_conf_pf_eqn_1}), 

\begin{equation}
\label{harm_conf_pf_eqn_2}
\displaystyle \alpha_{F}(V,W) = (\nabla^{g}_{F_{*}V}F_{*}W)^{\perp} + F_{*}\left(\nabla^{*}_{V}W - \nabla^{h}_{V}W\right), \smallskip %\medskip 
\end{equation}
where $(\nabla^{g}_{F_{*}V}F_{*}W)^{\perp}$ is the component of $\nabla^{g}_{F_{*}V}F_{*}W$ normal to $F(N)$.  Taking the trace of (\ref{harm_conf_pf_eqn_2}) using (\ref{harm_conf_pf_eqn_1}), and writing $\vec{H}$ for the mean curvature vector of $Im(F) \subseteq (M,g)$, gives: 

\begin{equation}
\label{harm_conf_eqn}
\displaystyle Tr(\alpha_{F}) = (2-n)\nabla^{h}\eta + e^{2\eta}\vec{H}. \smallskip %\medskip 
\end{equation}

By Eells and Sampson \cite{ES1}, a map $F$ is harmonic if and only if $Tr(\alpha_{F}) = 0$.  \end{proof}

We finish this section by briefly explaining how Theorem \ref{white_result} follows from White's results in \cite{Wh1}:  In \cite{Wh1}, maps of a closed Riemannian manifold $(N,h)$ are defined to be $2$-homotopic if their restrictions to the $2$-skeleton of a triangulation of $N$ are homotopic.  The result of \cite[Theorem 1]{Wh1} implies that the infimum of the energy in the homotopy class of a map $F$ is equal to the infimum in its $2$-homotopy class (this is explicitly stated for maps between closed Riemannian manifolds in \cite[Corollary 1]{Wh1} but follows from \cite[Theorem 1]{Wh1} for maps from a closed Riemannian manifold to any manifold).  Although $\R P^{2}$ is not the $2$-skeleton of a triangulation of $\R P^{n}$ (unless $n=2$), it is straightforward to check that $\widehat{\Psi}$ in Theorem \ref{white_result} coincides with the $2$-homotopy class of maps $F \in \Psi$. 

%%%%%%%%%%%%%%%%%%%%%%%%%%%%%%%%%%%%%%%%%%%%%%%%%%%%%%%%%%%%%%%%%%%%%%%%%%%%%%%%%%%%%%%%%%%%%%%%%%%%%%%%%%%%%%%%%%%%%%%%%

\section{Upper and Lower Bounds for the Infimum of the Energy}  
\label{main_proofs}   

%%%%%%%%%%%%%%%%%%%%%%%%%%%%%%%%%%%%%%%%%%%%%%%%%%%%%%%%%%%%%%%%%%%%%%%%%%%%%%%%%%%%%%%%%%%%%%%%%%%%%%%%%%%%%%%%%%%%%%%%% 

Below, we will prove Theorem \ref{inf_estimate} and then discuss some corollaries of the proof and alternate approaches to this result. 

\begin{proof}[Proof of Theorem \ref{inf_estimate}] For $n=2$, Theorem \ref{inf_estimate} follows from Lemmas \ref{surface_lemma} and \ref{2-sphere_lemma}.  For $n \geq 3$, let $\beta$ be as in Lemma \ref{inf_upper_bound_prop}.  The case $k = n-1$ of Lemma \ref{double_fibration_prop} implies: 
	
\begin{equation}
\label{inf_est_pf_eqn_1}
\displaystyle \frac{n \sigma(n)}{(n-1)\sigma(n-1)} \beta \leq \inf\limits_{F \in \Psi}E_{2}(F). \smallskip %\medskip  
\end{equation}

Successive application of this estimate, together with the result in the case $n=2$, gives the lower bound $C_{n} A^{\star}$ for the infimal energy in Theorem \ref{inf_estimate}.  Likewise, successive application of Lemma \ref{inf_upper_bound_prop} gives the upper bound $\frac{\sigma(n-2)}{2}A^{\star} = 2(\frac{n-1}{n})C_{n}A^{\star}$ for the infimum of the energy in $\Psi$. \\ 

If a map $F \in \Psi$ has $E_{2}(F) = C_{n}A^{\star}$, then $F$ is harmonic, and therefore smooth (cf. \cite[Ch.10]{Aub1}).  For $n \geq 3$, the case $k=2$ of Lemma \ref{double_fibration_prop} also implies that for all $\mathcal{Q} \in Gr(2)$, $F|_{\mathcal{Q}}$ minimizes energy in the homotopy class $\psi$ and is therefore a conformal branched immersion by Theorem \ref{lemaire_theorem}.  Any orthonormal pair of vectors $\vec{e}_{1}, \vec{e}_{2} \in T_{x}\R P^{n}$ is tangent to a totally geodesic $\mathcal{Q}(\vec{e}_{1},\vec{e}_{2}) \cong \R P^{2}$.  Because $F|_{\mathcal{Q}(\vec{e}_{1},\vec{e}_{2})}$ is a conformal branched immersion, $|dF(\vec{e}_{1})| = |dF(\vec{e}_{2})|$, and because this holds for all orthonormal pairs $\vec{e}_{1},\vec{e}_{2}$, the norm $|dF(\vec{e})|$ is constant on unit tangent vectors $\vec{e}$ to $\R P^{n}$ at $x$.  This implies $F$ is semiconformal, and Lemma \ref{harmonic_conformal_lemma} then implies $F$ is a homothety.  The harmonicity of $F|_{\mathcal{Q}}$ for $\mathcal{Q} \in Gr(2)$ also implies that its image $F(\mathcal{Q})$ is a minimal submanifold of $(M,g)$.  For any orthonormal triple of vectors $\vec{e}_{1}, \vec{e}_{2}, \vec{e}_{3}\in T_{x}\R P^{n}$ and $i,j \in \lbrace 1,2,3 \rbrace$, because $\mathcal{Q}(\vec{e}_{i},\vec{e}_{j})$ is totally geodesic in $(\R P^{n},g_{0})$ and its image via $F$ is minimal in $(M,g)$, the second fundamental form $B$ of $Im(F) \subseteq (M,g)$ satisfies $B(\vec{e}_{i},\vec{e}_{i}) +  B(\vec{e}_{j},\vec{e}_{j}) = 0$.  Because this holds for all $i,j \in \lbrace 1,2,3 \rbrace$ and triples $\vec{e}_{1}, \vec{e}_{2}, \vec{e}_{3} \in T_{x}\R P^{n}$, we have $B \equiv 0$, and the image of $F$ is totally geodesic in $(M,g)$. \end{proof}

As an alternative to the iterated use of Lemma \ref{inf_upper_bound_prop} in the proof of Theorem \ref{inf_estimate}, one could choose a totally geodesic $\mathcal{Q} \cong \R P^{2}$ in $\R P^{n}$ and a map $F \in \Psi$ with $E_{2}(F|_{\mathcal{Q}}) < A^{\star} + \varepsilon$, let $\mathcal{R}$ be the unique totally geodesic $\R P^{n-3}$ at maximal distance from $\mathcal{Q}$, and deform the identity map of $\R P^{n}$ along geodesic segments from $\mathcal{R}$ to $\mathcal{Q}$, expanding a tubular neighborhood $\mathcal{T}$ of $\mathcal{R}$ onto $\R P^{n} \setminus \mathcal{Q}$ and retracting $\R P^{n} \setminus \mathcal{T}$ onto $\mathcal{Q}$.  One can give a proof along these lines, however the result is the same as from the argument above.  Likewise, the lower bound $C_{n}A^{\star}$ for the energy of maps $F \in \Psi$ also follows directly from the case $k = 2$ of Lemma \ref{double_fibration_prop}. \\ 

Theorems \ref{inf_estimate} and \ref{chris_theorem} are the only quantitative lower bounds for the energy of a map of real projective space known to the author, however letting $\Psi^{(m)}$ be the homotopy class of mappings of $\R P^{n-m}$ induced by a homotopy class of mappings $\Psi$ of $\R P^{n}$, a stronger lower bound for the energy of maps in $\Psi^{(m)}$, together with the case $k = n-m$ of Lemma \ref{double_fibration_prop}, would give a stronger lower bound for the energy of maps in $\Psi$.  Likewise, for $n-m \geq 3$, if the infimum of the energy in $\Psi^{(m)}$ were known to be less than the upper bound $2(\frac{n-m-1}{n-m})C_{n-m}A^{\star}$ in Theorem \ref{inf_estimate}, an $m$-fold application of Lemma \ref{inf_upper_bound_prop} would give a stronger upper bound for the infimal energy of maps in $\Psi$.  In general, Lemma \ref{inf_upper_bound_prop} and (\ref{inf_est_pf_eqn_1}) in the proof of Theorem \ref{inf_estimate} imply: 

\begin{proposition}
\label{hyperplane_estimate}

Let $\Psi$ be a homotopy class of mappings of $(\R P^{n},g_{0})$, $n \geq 3$, and let $\beta$ be as in Lemma \ref{inf_upper_bound_prop}.  Then, letting $D_{n} = \frac{n\sigma(n)}{(n-1)\sigma(n-1)}$,  

\begin{equation} 
\label{hypersurface_est_eqn}
\displaystyle D_{n} \beta \leq \inf\limits_{F \in \Psi} E_{2}(F) \leq \frac{(n-1)^{2}}{n(n-2)} D_{n} \beta. 
\end{equation}
\end{proposition}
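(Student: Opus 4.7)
The proposition is essentially a direct corollary of two ingredients already established earlier in the paper, and the plan is to assemble them and verify that the constants match the stated form.

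For the lower bound, the plan is to invoke equation (\ref{inf_est_pf_eqn_1}) from the proof of Theorem \ref{inf_estimate} verbatim. That inequality, obtained by specializing the $k=n-1$ case of Lemma \ref{double_fibration_prop} and integrating over $Gr(n-1)$, reads exactly $\frac{n\sigma(n)}{(n-1)\sigma(n-1)}\beta \leq \inf_{F \in \Psi}E_{2}(F)$, which is the claim $D_{n}\beta \leq \inf_{F \in \Psi}E_{2}(F)$ by definition of $D_{n}$.

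For the upper bound, the plan is to apply Lemma \ref{inf_upper_bound_prop} directly, giving $\inf_{F \in \Psi}E_{2}(F) \leq \frac{\sigma(n-2)}{\sigma(n-3)}\beta$, and then to verify the algebraic identity
\begin{equation*}
\displaystyle \frac{\sigma(n-2)}{\sigma(n-3)} = \frac{(n-1)^{2}}{n(n-2)} D_{n}.
\end{equation*}
Expanding the right-hand side using the definition of $D_{n}$ reduces this to showing $\frac{\sigma(n-2)}{\sigma(n-3)} = \frac{(n-1)\sigma(n)}{(n-2)\sigma(n-1)}$. This follows from the standard recursion $\sigma(k) = \frac{2\pi}{k-1}\sigma(k-2)$ for the volume of the unit sphere: applying it with $k=n$ and $k=n-1$ gives $(n-1)\sigma(n) = 2\pi\,\sigma(n-2)$ and $(n-2)\sigma(n-1) = 2\pi\,\sigma(n-3)$, and dividing these identities yields the required equality.

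There is no substantial obstacle here: the geometric content has already been extracted in Lemma \ref{inf_upper_bound_prop} (upper bound via the conformal dilations $\theta_{t}$ and the Fermi-coordinate computation in (\ref{inf_upper_bound_prop_pf_eq_1})) and in equation (\ref{inf_est_pf_eqn_1}) of the proof of Theorem \ref{inf_estimate} (lower bound via the integral geometric identity of Lemma \ref{double_fibration_prop} for $k=n-1$). The only step to record is the $\Gamma$-function bookkeeping converting the ratio $\sigma(n-2)/\sigma(n-3)$ into the form $\frac{(n-1)^{2}}{n(n-2)}D_{n}$, which is the closest thing to a computational subtlety in the argument.
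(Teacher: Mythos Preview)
Your proposal is correct and follows exactly the paper's approach: the paper simply states that Lemma~\ref{inf_upper_bound_prop} and equation~(\ref{inf_est_pf_eqn_1}) imply the proposition, without spelling out the constant-matching, while you additionally verify the identity $\frac{\sigma(n-2)}{\sigma(n-3)} = \frac{(n-1)^{2}}{n(n-2)} D_{n}$ via the sphere-volume recursion.
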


As $n \rightarrow \infty$, the ratio $\frac{(n-1)^{2}}{n(n-2)}$ between the upper and lower bounds in (\ref{hypersurface_est_eqn}) approaches $1$, and the infimum of the energy in a homotopy class of mappings of real projective $n$-space is approximately proportional to the infimum in the induced class of mappings of a hyperplane. 

%%%%%%%%%%%%%%%%%%%%%%%%%%%%%%%%%%%%%%%%%%%%%%%%%%%%%%%%%%%%%%%%%%%%%%%%%%%%%%%%%%%%%%%%%%%%%%%%%%%%%%%%%%%%%%%%%%%%%%%%%

\end{document}